\newtheorem{conj}{Conjecture}
\newtheorem{teo}{Theorem}
 \newtheorem{lem}{Lemma}
 \newtheorem{prop}{Proposition}
 \theoremstyle{definition}
 \theoremstyle{remark}
 \numberwithin{equation}{section}
\newcommand{\R}{\mathbb{R}} %conjunto dos reais
\newcommand{\C}{\mathbb{C}} %conjunto dos complexos
\newcommand{\Q}{\mathbb{Q}} %conjunto dos racionais
\newcommand{\h}{\mathbb{H}} %quatrernions
\newcommand{\F}{\mathbb{F}} %field
\begin{document}
\baselineskip=1.7em % n\~{a}o retirar essa linha

\title{A note on nontrivial intersection for selfmaps of complex Grassmann manifolds} \date{}
\author{Tha\'is F. M. Monis \\ tfmonis@rc.unesp.br \and Northon C. L. Penteado \\ northoncanevari@gmail.com  \and S\'ergio T. Ura \\ sergioura@gmail.com  \and
Peter Wong\thanks{This work was supported by Capes of Brazil - Programa Pesquisador Visitante Especial - Ci\^encia sem fronteiras, Grant number 88881.068085/2014-01.} \\
pwong@bates.edu}
\maketitle

\begin{abstract} Let $G(k,n)$ be the complex Grassmann manifold of $k$-planes in $\mathbb C^{k+n}$. In this note, we show that for $1<k<n$ and for any selfmap $f:G(k,n)\to G(k,n)$, there exists a $k$-plane $V^k\in G(k,n)$ such that $f(V^k)\cap V^k\ne \{0\}$.
\end{abstract}

\section{Introduction}

The problem of determining the fixed point property (f.p.p.) for Grassmann manifolds has been studied by many authors (for example \cite{oneill}, \cite{GH3}, \cite{Ho1}). 

Let
\[
\F M(n_1, \ldots , n_k) = \frac{U_\F (n)}{U_\F (n_1) \times \cdots \times U_\F (n_k)}, 
\]
$n_1+ \cdots + n_k =n$. Here, $\F$ stands for one of  the fields $\R$, $\C$ or the skew field $\h$, and 
 
\[
  U_\F (n) = \left\{\begin{array}{l}
  O(n) \ \text{the orthogonal group of order} \ n \ \text{if} \ \F=\R, \\
  U(n)  \ \text{the unitary group of order} \ n \ \text{if} \ \F=\C, \\
  Sp(n) \ \text{the symplectic group of order} \ n \ \text{if} \ \F=\h. \end{array} \right.
\]

\

In \cite{GH2}, Glover and Homer have given the following necessary condition for $\F M(n_1, \ldots , n_k)$ to have the f.p.p..

\begin{teo}[\cite{GH2}, Theorem 1] If $\F M(n_1, \ldots , n_k)$ has the f.p.p., then $n_1, \ldots , n_k$ are distinct integers and, if $\F=\R$ or $\C$, at most one is odd.
\end{teo}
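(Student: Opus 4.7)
The plan is to prove the contrapositive by constructing, for each violated hypothesis, an explicit selfmap of $\F M(n_1,\ldots,n_k)$ witnessing failure of the f.p.p. Suppose first that two indices coincide, $n_i=n_j$ with $i\ne j$. Realizing $\F M(n_1,\ldots,n_k)$ as the space of ordered orthogonal decompositions $\F^n=W_1\oplus\cdots\oplus W_k$ with $\dim_\F W_\ell=n_\ell$, the transposition of the $i$-th and $j$-th summands is a continuous involution; any fixed point would force $W_i=W_j$, contradicting the direct-sum condition. This yields a fixed-point-free selfmap and handles the distinctness clause for all three choices of $\F$ at once.

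For the odd-parity clause, assume $\F\in\{\R,\C\}$ and that at least two of the $n_i$ are odd. The key general fact is the rank criterion: for a compact connected Lie group $G$ with closed connected subgroup $H$, $\chi(G/H)\ne 0$ iff $\mathrm{rank}\,G=\mathrm{rank}\,H$. Applied to $\R M(n_1,\ldots,n_k)=O(n)/\prod_i O(n_i)$, using $\mathrm{rank}\,O(m)=\lfloor m/2\rfloor$, the required rank equality $\lfloor n/2\rfloor=\sum_i\lfloor n_i/2\rfloor$ fails precisely when at least two of the $n_i$ are odd, so $\chi(\R M(n_1,\ldots,n_k))=0$ in that case. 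In the real setting this already concludes the argument: by Poincar\'e--Hopf a closed smooth manifold with vanishing Euler characteristic admits a nowhere-vanishing vector field, whose time-$\epsilon$ flow is a selfdiffeomorphism homotopic to the identity and free of fixed points.

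The complex case requires more work, since $\mathrm{rank}\,U(n)=n=\sum_i n_i=\mathrm{rank}\,\prod_i U(n_i)$, whence $\chi(\C M)>0$ always. Instead, consider the antiholomorphic involution $\sigma$ on $\C M(n_1,\ldots,n_k)$ sending $(W_1,\ldots,W_k)\mapsto(\overline{W_1},\ldots,\overline{W_k})$, induced by complex conjugation on $\C^n$. Its fixed set is canonically identified with $\R M(n_1,\ldots,n_k)$, and $\sigma$ acts by $-1$ on the normal bundle of this fixed submanifold, hence is transverse to the identity there. The smooth Lefschetz fixed-point formula then gives $L(\sigma)=\chi(\mathrm{Fix}\,\sigma)=\chi(\R M(n_1,\ldots,n_k))=0$. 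Since $\C M(n_1,\ldots,n_k)$ is simply connected and of dimension at least $2$, the Wecken/Hopf converse to the Lefschetz theorem produces a fixed-point-free selfmap homotopic to $\sigma$, again violating f.p.p.

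The principal obstacle will be marshalling the three external inputs correctly: the rank criterion for nonvanishing Euler characteristic of a homogeneous space, the smooth Lefschetz formula $L(\sigma)=\chi(\mathrm{Fix}\,\sigma)$ for the conjugation involution, and the converse Lefschetz theorem on simply-connected closed manifolds. Granting these, the proof collapses to the elementary rank arithmetic on the floors $\lfloor n_i/2\rfloor$, and no explicit computation in the cohomology ring of the flag manifold or in Schubert calculus is required.
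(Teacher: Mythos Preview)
The paper does not supply its own proof of this theorem: it is quoted verbatim as Theorem~1 of \cite{GH2} and used only as background motivation, so there is no argument in the paper to compare your proposal against. That said, your sketch is essentially the original Glover--Homer proof: the swap map when two $n_i$ coincide, the vanishing of $\chi(\R M)$ via the rank identity $\lfloor n/2\rfloor=\sum_i\lfloor n_i/2\rfloor$ failing once two $n_i$ are odd, and the conjugation involution $\sigma$ on $\C M$ with $L(\sigma)=\chi(\R M)=0$ followed by Wecken's theorem. Two small points you should make explicit in a final write-up: the identity $L(\sigma)=\chi(\mathrm{Fix}\,\sigma)$ for a smooth involution with non-isolated fixed set is not the usual Lefschetz--Hopf statement and carries an a priori sign $(-1)^{\dim\mathrm{Fix}\,\sigma}$ (harmless here since an odd-dimensional closed manifold has $\chi=0$), and Wecken's theorem needs $\dim_{\R}\C M\ge 3$, which holds in every case not already covered by the swap argument.
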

\

The above theorem gives rise to the following conjectures:

\

\begin{conj} 
If $n_1, \ldots , n_k$ are all distinct then $\h M(n_1, \ldots , n_k)$ has the f.p.p..
\end{conj}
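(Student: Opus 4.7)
The plan is to apply the Lefschetz fixed-point theorem: for any selfmap $f$ of $X = \h M(n_1, \ldots, n_k)$, it suffices to show that the Lefschetz number $L(f) \ne 0$. The decisive advantage of the quaternionic setting is that $X$ admits a CW decomposition by symplectic Schubert cells, all of which have real dimension divisible by $4$. Consequently $H^*(X;\Q)$ is concentrated in degrees $\equiv 0 \pmod 4$, and
$$L(f) \;=\; \sum_{i \ge 0} \operatorname{tr}\bigl(f^* \mid H^{4i}(X;\Q)\bigr)$$
is a \emph{non-alternating} sum, so the task reduces to additive positivity rather than avoiding cancellation. This is precisely the structural reason why the conjecture is plausible for $\h$ but known to fail in the real and complex cases without further hypotheses.

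First I would use the Borel presentation of the cohomology ring. Writing $\sigma_j^{(l)} \in H^{4j}$ for the $j$-th symplectic Pontryagin class of the $l$-th tautological quaternionic bundle $\gamma_l$ of rank $n_l$, one has
$$H^*(X;\Q) \;\cong\; \Q\bigl[\sigma_j^{(l)} : 1\le l\le k,\ 1\le j\le n_l\bigr] \Big/ J,$$
where $J$ is generated by the relations forced by the splitting $\gamma_1 \oplus \cdots \oplus \gamma_k \cong \underline{\h^n}$. Any graded ring endomorphism $\phi = f^*$ is determined by its images on these generators, and the Schubert classes provide a distinguished $\Z$-basis of $H^*(X;\Q)$ indexed by $k$-tuples of partitions $\lambda^{(l)}$ fitting inside an $n_l \times (n - n_l)$ box.

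The next step is to exploit the distinctness hypothesis $n_1 < \cdots < n_k$ to rigidify $\phi$. Because the top Pontryagin class $\sigma_{n_l}^{(l)}$ lies in degree $4n_l$ and these top degrees are all distinct, one can argue inductively (on the rank $n_l$) that $\phi$ preserves the natural filtration coming from the $\gamma_l$'s, ruling out permutations of blocks of generators of differing ranks. This should bring $L(f)$ into a parametrized form, analogous to the treatment of the complex case under the hypothesis \textquotedblleft at most one $n_i$ odd\textquotedblright{} in the work of Glover--Homer and Hoffman.

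The main obstacle will be the final numerical step. Even with all cohomology in degrees $\equiv 0 \pmod 4$ and a combinatorially explicit Schubert basis, non-vanishing of $L(f)$ is not automatic: the complex analogue fails precisely when certain selfmaps (for example those swapping tautological and quotient bundles) induce endomorphisms of cohomology with vanishing Lefschetz number, and the quaternionic versions of these coincidences must be systematically excluded. I expect the argument to proceed either by identifying a distinguished generator---e.g.\ the top Pontryagin class of the smallest-rank bundle $\gamma_1$---whose contribution to $L(f)$ dominates all others, or by a Hirzebruch--Riemann--Roch reinterpretation of $L(f)$ as an equivariant index that, thanks to the quaternionic structure, has a manifest sign.
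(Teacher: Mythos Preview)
The paper does not prove this statement: it is explicitly labelled a \emph{conjecture}, and the paper only surveys partial results from the literature (projective spaces, $\h M(1,n_2,n_3)$ with $n_3\ge 2n_2^2-1$, $\h M(2,q)$, and $\h M(p,q)$ for $p\le 3$ or $q\ge 2p^2-p-1$). The paper's own contribution (Theorem~\ref{TThm}) concerns a weaker nontrivial-intersection property for \emph{complex} Grassmannians, not the fixed point property for quaternionic flag manifolds. So there is no proof in the paper to compare your proposal against.

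As for your outline on its own terms: it is a reasonable plan of attack, but it is not a proof, and the gaps you flag are genuine open problems rather than routine details. Two points deserve emphasis. First, the observation that $H^*(\h M;\Q)$ is concentrated in degrees divisible by $4$ only kills the sign $(-1)^i$ in the Lefschetz sum; the individual traces $\operatorname{tr}(f^*\mid H^{4i})$ can still be negative, so ``additive positivity'' is not what remains---you still need to control the eigenvalues of $f^*$, which is exactly as hard as in the complex case. Second, your rigidity step (``distinct $n_l$ forces $\phi$ to preserve the block filtration'') is the heart of the matter and is not known. The complex analogue is Hoffman's theorem, which required substantial work, applies only to Grassmannians $G(k,n)$ with $k<n$, and even there leaves the case $\phi(c_1)=0$ unresolved (as the paper notes just before stating the Main Result). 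For general quaternionic flag manifolds with $k\ge 3$ blocks no such classification of graded endomorphisms is available, and your inductive sketch does not supply one. Until that classification (or a bypass of it) is in hand, the final Lefschetz computation cannot even be set up.
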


\begin{conj} 
If $n_1, \ldots , n_k$ are all distinct and at most one is odd then $\F M(n_1, \ldots , n_k)$ has the f.p.p., for $\F=\R$ and $\F=\C$.
\end{conj}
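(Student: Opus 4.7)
The plan is to apply the Lefschetz fixed point theorem to an arbitrary self-map $f$ of $F := \F M(n_1, \ldots, n_k)$: I would use $\Q$-coefficients when $\F = \C$ and $\F_2$-coefficients when $\F = \R$ (the latter in order to bypass the torsion and orientability issues of real flag manifolds; note that $L(f) \bmod 2 \ne 0$ already forces the integral Lefschetz number to be nonzero, so a mod-$2$ calculation still delivers an honest fixed point). Showing $L(f) \ne 0$ for every such $f$ then suffices.

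The first step is to invoke Borel's presentation of the cohomology ring. In the complex case,
\[
H^*(\C M(n_1, \ldots, n_k); \Q) \;\cong\; \Q[x_1, \ldots, x_n]^{S_{n_1} \times \cdots \times S_{n_k}} \big/ \bigl(e_1, \ldots, e_n\bigr),
\]
with the $x_i$ in degree $2$ and the $e_j$ the elementary symmetric polynomials coming from the triviality of the direct sum of the tautological bundles; an analogous $\F_2$-presentation in terms of Stiefel--Whitney classes handles the real case. A self-map induces a graded ring endomorphism $\varphi$ of this coinvariant algebra, and one expands $L(f) = \sum_i (-1)^i \operatorname{tr}(\varphi_i)$ in the Schubert basis.

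The second step is to classify the possible $\varphi$. Because the $n_i$ are pairwise distinct, the parabolic block structure is rigid inside $S_n$, so $\varphi$ must preserve that block decomposition; the elementary-symmetric relations then severely restrict the admissible actions on the degree-$2$ piece to, roughly, scaled block permutations. For each admissible $\varphi$ the third step is the trace computation, which I would reduce to a Grassmannian calculation along the iterated fibration $F \to \C M(n_1, n - n_1) \to \cdots$ by using multiplicativity of Lefschetz numbers on orientable fibrations, and then apply the known Grassmannian case (Glover--Homer, Hoffman). The parity hypothesis ``at most one $n_i$ is odd'' enters decisively at this stage: without it one can construct sign-twisting $\varphi$ (flipping signs on an even block while swapping two odd blocks) giving $L(f) = 0$, exactly the obstruction seen in the Glover--Homer counterexamples. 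The main obstacle I expect is the combined classification-and-trace computation, especially in the real case where the $\F_2$-Schubert calculus is less tractable and where extracting the parity hypothesis precisely to exclude all vanishing-Lefschetz scenarios is where the argument should be most delicate.
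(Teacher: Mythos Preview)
The statement you are attempting is labeled \emph{Conjecture} in the paper, and the paper does not prove it; immediately after stating it the authors list only the partial cases in which it is known, and their own contribution (Theorem~\ref{TThm}) is a strictly weaker nontrivial-intersection result for complex Grassmannians, not a fixed-point theorem. So there is no proof in the paper to compare your proposal against.

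More substantively, your outline has a genuine gap at the classification step. You plan to ``classify the possible $\varphi$'' and assert that the relations force the degree-$2$ action to be, ``roughly, scaled block permutations.'' But this classification is exactly what is open: the paper points out that even for the Grassmannian $G(k,n)$ with $k<n$, if a graded ring endomorphism $h$ satisfies $h(c_1)=0$ then ``it is still unclear about what $h$ looks like in general,'' and that proving $h=0$ in this case would already settle the f.p.p.\ for $G(k,n)$ completely. Hoffman's theorem handles only $h(c_1)=mc_1$ with $m\neq 0$; Glover--Homer cover only $k\le 3$ or $n\ge 2k^2-k-1$. Thus your appeal to ``the known Grassmannian case'' does not cover the base of your proposed fibration reduction in the generality the conjecture requires.

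There is a second, independent problem with the reduction itself. Lefschetz numbers are not multiplicative along a fibration for an \emph{arbitrary} self-map of the total space: multiplicativity (in the form you need) requires a fiber-preserving map, and a general continuous $f:F\to F$ has no reason to respect the projection $F\to \F M(n_1,n-n_1)$. So even granting full knowledge of the Grassmannian endomorphisms, the iterated-fibration step as stated does not go through.
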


\

The above conjectures were already proved to be true in the following cases:

\begin{itemize}
\item Projective spaces ($\F M(1, n-1)$);

\item If $n_2$ and $n_3$ are distinct positive even integers and $n_3 \geq 2 n_2^2-1$ then $\C M (1, n_2, n_3)$ has the f.p.p. (\cite{GH2}).

\item If $1, n_2$ and $n_3$ are distinct positive integers and $n_3 \geq 2 n_2^2 -1$, then $\h M(1, n_2 , n_3)$ has the f.p.p. (\cite{GH2}).

\item If $n_2 < n_3$ are even integers greater than $1$ and either $n_2 \leq 6$ or $n_3 \geq n_2^2 - 2 n_2 -2$, then $\R M (1, n_2, n_3)$ has the f.p.p. (\cite{GH2}).

\item If $n_1, n_2 , n_3$ are positive integers such that at most one is odd, $n_1 \leq 3$, $n_3 \geq n_2^2 -1$, and $[n_1/2] < [n_2/2]< [n_3/2]$, then $\R M(n_1, n_2, n_3)$ has the f.p.p. (\cite{GH2}).

\item If $\F = \C$ or $\h$, $\F M (2, q) $ has the f.p.p. for all $q >2$ (\cite{oneill}).

\item $\R (2, q)$ has the f.p.p. for all $q=4k$ or $q=4k +1$, $k=1, 2, 3, \ldots$  (\cite{oneill}).

\item  For $p \leq 3$ and $q>p$ or $p>3$ and $q \geq 2p^2 -p - 1$, $\C M(p, q)$ has the f.p.p. iff $pq$ is even (\cite{GH3}).

\item For $p \leq 3$ and $q>p$ or $p>3$ and $q \geq 2p^2 -p - 1$, $\h M (p,q)$ always has the f.p.p. (\cite{GH3}). 
\end{itemize}

The main tool used to prove the above results is the calculation of the Lefschetz number of a self-map of such a space. Let's focus on the case of complex Grassmann manifolds $\C (k,n)=G(k,n)$, the space of $k$-planes in $\C^{k+n}$.   Let $\gamma^k$ be the canonical $k$-plane bundle over $G(k,n)$. If $$ch(\gamma^k)=1+c_1+ \cdots + c_k, \  \ c_i \in H^{2i}(G(k,n); \Q),$$ is the total Chern class of $\gamma^k$, then the cohomology ring $H^\ast(G(k,n); \Q)$  is given by: $$H^\ast(G(k,n); \Q) = \Q[c_1, \ldots , c_k]/ I_{k,n},$$ where $I_{k,n}$ is the ideal generated by the elements $(c^{-1})_{n+1}, \ldots , (c^{-1})_{n+k}$.  Here, $(c^{-1})_{q}$ is the part of the formal inverse of $c$ in dimension $2q$  (see \cite{Ho1}, Theorem 2.1). Then, $c_1$ is the only generator in dimension $2$. Therefore, given a self-map $f: G(k,n) \to G(k,n)$, $f^\ast (c_1)= m c_1$ for some coefficient $m$. 

\begin{teo}[\cite{GH3}, Theorem 1] 
Let $k \leq 3$ and $n>k$ or $k>3$ and $n \geq 2k^2-k-1$. Then every graded ring endomorphism of $H^\ast (G(k,n); \Q)$ is an Adams endomorphism\footnote{An Adams endomorphism of $H^\ast (G(k,n); \Q)$ is a endomorphism $\varphi$ of the form $\varphi(x)=\lambda^i x$ for $x\in H^{2i} (G(k,n); \Q)$. The coefficient $\lambda$ is called the degree of $\varphi$.}. Consequently, if $f: G(k,n) \to G(k,n)$ is a self-map with $f^\ast(c_1)=m c_1$ then $f^\ast (c_i)= m^i c_i$, $i=1, \ldots , k$. 
\end{teo}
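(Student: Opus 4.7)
The plan is to show that any graded ring endomorphism $\varphi$ of $H^*(G(k,n);\mathbb{Q}) = \mathbb{Q}[c_1,\dots,c_k]/I_{k,n}$ is determined by its restriction to $H^2$ and takes the Adams form $\varphi(c_i)=m^i c_i$, where $m\in\mathbb{Q}$ is defined by $\varphi(c_1)=m c_1$. The approach is induction on the weight $i$, exploiting the defining ideal $I_{k,n}$ to kill the ``off-diagonal'' contributions that could a priori occur in $\varphi(c_i)$.

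\textbf{Setup and induction step.} Since $H^2(G(k,n);\mathbb{Q})$ is one-dimensional and spanned by $c_1$, any graded endomorphism satisfies $\varphi(c_1)=m c_1$ for some $m\in\mathbb{Q}$. Assuming inductively that $\varphi(c_j)=m^j c_j$ for $1\le j<i$, I would write
\[
\varphi(c_i)=m^i c_i + R_i,
\]
where $R_i\in H^{2i}(G(k,n);\mathbb{Q})$ is a linear combination of monomials $c_1^{a_1}\cdots c_{i-1}^{a_{i-1}}$ with $\sum_{j} j\,a_j = i$. The task is then to force $R_i=0$.

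\textbf{Exploiting the relations.} The endomorphism $\varphi$ lifts to an endomorphism of $\mathbb{Q}[c_1,\dots,c_k]$ that must carry each generator $(c^{-1})_{n+j}$ of $I_{k,n}$, for $j=1,\dots,k$, back into $I_{k,n}$. Writing $(c^{-1})_{n+j}$ out explicitly as a polynomial in $c_1,\dots,c_k$ (via the formal inverse of $1+c_1+\cdots+c_k$), applying $\varphi$ in the inductive form above, and separating the Adams piece from the contributions of the unknown $R_\ell$'s, produces a system of polynomial identities modulo $I_{k,n}$ that the $R_\ell$'s must satisfy. Under the stated numerical hypothesis --- $k\le 3$ with $n>k$, or $k>3$ with $n\ge 2k^2-k-1$ --- this system should be rigid enough to force $R_i=0$ for each $i$, by a dimension comparison against the Schubert (Schur-polynomial) basis of $H^*(G(k,n);\mathbb{Q})$.

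\textbf{Main obstacle and consequence.} The hardest part is exactly this last step: extracting from only $k$ defining relations enough independent linear constraints to simultaneously eliminate the entire ``off-diagonal'' ambiguity inside every $R_i$. This is precisely where the bound $n\ge 2k^2-k-1$ must enter --- the relations then sit in degrees high enough, and their Schur expansions populate enough distinct monomials, that the linear algebra closes up. Once $\varphi$ is known to be Adams as a ring endomorphism, the concluding statement --- that a self-map $f:G(k,n)\to G(k,n)$ with $f^*(c_1)=m c_1$ automatically satisfies $f^*(c_i)=m^i c_i$ for all $i$ --- follows immediately by applying the result to $\varphi=f^*$.
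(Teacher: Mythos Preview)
The paper does not prove this theorem; it is quoted verbatim from \cite{GH3} and used as a black box (notably in Case~1 of the proof of Theorem~\ref{TThm}). So there is no ``paper's own proof'' to compare against.

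As for your proposal on its own merits: the overall shape is reasonable and is indeed close to how Glover--Homer and later Hoffman argue --- write $\varphi(c_i)=m^i c_i + R_i$ and use the defining relations $(c^{-1})_{n+j}\in I_{k,n}$ to kill the $R_i$. But what you have written is a plan, not a proof. The decisive step --- showing that the system of constraints coming from $\varphi\bigl((c^{-1})_{n+j}\bigr)\in I_{k,n}$ actually forces every $R_i=0$ --- is asserted (``should be rigid enough'', ``this is precisely where the bound must enter'') rather than carried out. In the actual Glover--Homer argument the bound $n\ge 2k^2-k-1$ is not just a heuristic degree count: it guarantees that certain specific monomials appearing in the expansion of the relations are linearly independent in the Schubert basis, and one has to identify those monomials and check the nonvanishing of the relevant coefficients explicitly. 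Without that computation (or an equivalent one), the inductive step does not close. Your sketch also does not separate the small cases $k\le 3$, which in \cite{oneill} and \cite{GH3} are handled by direct low-degree computation rather than by the general mechanism.
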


The classification of the graded ring endomorphisms of $H^\ast (G(k,n); \Q)$ is fundamental in the study of f.p.p. for $G(k,n) $ because of the following.

\begin{prop} 
An Adams endomorphism of $H^\ast(G(k,n); \Q)$ has Lefschetz number zero if and only if its degree is $-1$ and $kn$ is odd.
\end{prop}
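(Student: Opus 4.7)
The plan is to identify $L(\varphi)$ with the Poincar\'e polynomial of $G(k,n)$ evaluated at the Adams coefficient $\lambda$, and then determine the rational zeros of that polynomial.

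Since $H^{\mathrm{odd}}(G(k,n);\Q)=0$, an Adams endomorphism $\varphi$ of degree $\lambda$ satisfies
\[
L(\varphi) \;=\; \sum_{i=0}^{kn} \lambda^i \dim_\Q H^{2i}(G(k,n);\Q) \;=\; P(\lambda),
\]
where $P(t)$ is the Poincar\'e polynomial of $G(k,n)$. It is classical (from the Schubert cell decomposition) that
\[
P(t) \;=\; \binom{n+k}{k}_{\!t} \;=\; \prod_{i=1}^{k}\frac{1-t^{n+i}}{1-t^i},
\]
the Gaussian binomial coefficient, which is a genuine polynomial with integer coefficients. Moreover $\lambda\in\Q$, since $\varphi(c_1)=\lambda c_1$ lives in the one-dimensional $\Q$-vector space $H^{2}(G(k,n);\Q)$.

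Next I would argue that $P(\lambda)=0$ forces $\lambda=-1$. Every factor $1-t^{j}$ decomposes into cyclotomic polynomials, so $P(t)$ itself is (up to sign) a product of cyclotomics; hence every complex root of $P$ is a root of unity. The only rational roots of unity are $\pm 1$, and $P(1)=\binom{n+k}{k}>0$, so any rational root of $P$ must equal $-1$. This is the step I expect to be the main obstacle; once it is in hand, the proposition is essentially bookkeeping.

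To conclude, I would compute $P(-1)$ through the multiplicity of $\Phi_{2}(t)=1+t$ in the cyclotomic factorisation of $P(t)$. Since $\Phi_{2}$ divides $1-t^{j}$ exactly once when $j$ is even and not at all when $j$ is odd, this multiplicity equals
\[
\#\{\,1\le i\le k:\ n+i\ \text{even}\,\}\;-\;\#\{\,1\le i\le k:\ i\ \text{even}\,\},
\]
which a short parity count reduces to $1$ when both $n$ and $k$ are odd and to $0$ otherwise. Therefore $P(-1)=0$ if and only if $nk$ is odd, yielding both implications of the proposition.
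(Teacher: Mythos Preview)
Your argument is correct. The identification $L(\varphi)=P(\lambda)$ with $P(t)=\binom{n+k}{k}_t$ is standard, and your two key steps are sound: (i) since the Gaussian binomial factors as a product of cyclotomic polynomials, every rational root is $\pm 1$, and $P(1)=\binom{n+k}{k}>0$ rules out $\lambda=1$; (ii) the multiplicity of $\Phi_2$ in $P$ is $\lceil k/2\rceil-\lfloor k/2\rfloor$ when $n$ is odd and $0$ when $n$ is even, hence equals $1$ precisely when $k$ and $n$ are both odd. The parenthetical ``up to sign'' is unnecessary (both $P$ and the cyclotomics are monic), but harmless.

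As for comparison with the paper: there is nothing to compare. The paper does not prove this proposition; its entire proof reads ``See \cite{GH2}, Proposition~4.'' Your write-up supplies exactly the self-contained argument the paper omits, and the Poincar\'e-polynomial/cyclotomic approach you use is the standard one, almost certainly the same as in the cited Glover--Homer reference.
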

\begin{proof} See \cite{GH2}, Proposition 4.
\end{proof}

\

In \cite{Ho1}, M. Hoffman was able to prove the following.

\begin{teo}[\cite{Ho1}, Theorem 1.1] 
Let $k <n$ and $h$ be a graded ring endomorphism of $H^\ast (G(k,n); \Q)$ with $h(c_1)=m c_1$, $m \neq 0$. Then $h(c_i)=m^i c_i$, $1 \leq i \leq k$.
\end{teo}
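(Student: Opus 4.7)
The plan is to proceed by induction on $i$, proving $h(c_i) = m^i c_i$ for each $i = 1, \ldots, k$; the case $i = 1$ is the hypothesis. As a preliminary simplification, I would observe that the \emph{Adams endomorphism} $\psi_m \colon H^\ast(G(k,n); \Q) \to H^\ast(G(k,n); \Q)$ defined by $\psi_m(x) = m^j x$ on $H^{2j}$ is a well-defined graded ring automorphism (the ideal $I_{k,n}$ is homogeneous, and $m \neq 0$ makes $\psi_m$ invertible, with inverse $\psi_{1/m}$). Setting $g := \psi_{1/m} \circ h$, one has $g(c_1) = c_1$, and the theorem becomes equivalent to showing $g = \mathrm{id}$.

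Every degree $2i$ with $i \leq k$ satisfies $2i \leq 2k < 2(n+1)$, hence lies strictly below the degree of every generator of $I_{k,n}$. Therefore $H^{2i}(G(k,n); \Q) = \Q[c_1,\ldots,c_k]_{2i}$, and the monomials $c_1^{a_1}\cdots c_i^{a_i}$ with $\sum_j j a_j = i$ form a $\Q$-basis. In the inductive step for $2 \leq i \leq k$, assuming $g(c_j) = c_j$ for $j < i$, I may write uniquely
$$g(c_i) \;=\; \alpha_i\, c_i + R_i(c_1,\ldots,c_{i-1})$$
with $\alpha_i \in \Q$ and $R_i$ homogeneous of weighted degree $i$; the task reduces to forcing $\alpha_i = 1$ and $R_i = 0$.

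The key input is the family of relations $u_q := (c^{-1})_q$, which vanish in $H^\ast(G(k,n); \Q)$ for $n+1 \leq q \leq n+k$. Applying the ring homomorphism $g$ to the identities $u_q = 0$ yields
$$0 \;=\; u_q\bigl(g(c_1),\ldots,g(c_k)\bigr) \quad \text{in } H^{2q}(G(k,n);\Q), \qquad n+1 \leq q \leq n+k.$$
After substituting $g(c_j) = c_j$ for $j < i$ from the inductive hypothesis and expanding in a basis, these become polynomial constraints on $\alpha_i$, the coefficients of $R_i$, and (when $i < k$) the still-unknown $g(c_{i+1}),\ldots,g(c_k)$. A careful analysis---separating, by degree considerations, the contributions of each $g(c_l)$ for $l \geq i$, and in particular isolating the leading contribution of $g(c_i)$ to $g(u_{n+1})$---shows that the only simultaneous solution is $\alpha_i = 1$ and $R_i = 0$.

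The main obstacle is precisely this combinatorial bookkeeping, which requires handling several of the relations $u_{n+1},\ldots,u_{n+k}$ together and tracking how each unknown $g(c_l)$ contributes to a given basis monomial. The hypothesis $k<n$ plays two roles: it ensures the uniqueness of the decomposition $g(c_i) = \alpha_i c_i + R_i$ (no relations appear in degrees $\leq 2k$), and it provides enough independent constraints from the $u_q$'s to pin down all the unknowns. Its necessity is visible already in $G(2,2)$, where the assignment $c_1 \mapsto c_1$, $c_2 \mapsto c_1^2 - c_2$ defines an additional, non-Adams, graded ring endomorphism.
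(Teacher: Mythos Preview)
The paper does not prove this theorem at all: it is quoted as Hoffman's result \cite{Ho1} and used as background, with no argument supplied. So there is no in-paper proof to compare your proposal against.

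As for the proposal itself, the preliminary reduction to $m=1$ via composition with the Adams automorphism $\psi_{1/m}$ is clean and correct, and your observation that no relations occur in degrees $\leq 2k$ (since $k<n$ implies $2k<2(n+1)$) is exactly what justifies the unique decomposition $g(c_i)=\alpha_i c_i + R_i(c_1,\dots,c_{i-1})$. The counterexample in $G(2,2)$ is a nice touch.

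However, the heart of your argument is the sentence ``A careful analysis\dots shows that the only simultaneous solution is $\alpha_i=1$ and $R_i=0$'', and this step is not carried out. That is precisely where the entire difficulty of Hoffman's theorem lies. The obstacle you yourself flag --- that the relations $u_q(g(c_1),\dots,g(c_k))=0$ involve all of the still-unknown images $g(c_{i+1}),\dots,g(c_k)$ simultaneously --- is serious: a naive induction on $i$ does not close, because at each stage the constraints are polluted by unknowns you have not yet controlled. Hoffman's actual proof does not proceed by this kind of direct induction; it passes through symmetric functions (roughly, working with Chern roots and power sums rather than with the $c_i$ directly) to decouple the problem. Without either reproducing that machinery or giving an explicit mechanism for eliminating the higher $g(c_l)$ from the system, the proposal remains a plausible outline rather than a proof.
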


\

If $k<n$ and $h$ is a graded ring endomorphism of $H^\ast (G(k,n); \Q)$ with $h(c_1)=0$, it is still unclear about what $h$ looks like in general. The conjecture is that, in this case, $h$ must be the null homomorphism. If one can prove such conjecture then the problem of determining the f.p.p. for $G(k,n)$ will be completely solved.

In this note, we prove a much more modest result for complex Grassmann manifolds than a fixed point theorem. Our main theorem is the following.

\

\begin{teo}[Main Result]  \label{TThm}
Let $k > 1$ and $k < n$. Then for every continuous map $f: G(k,n) \to G(k,n)$ there exists a $k$-plane $V^k \in G(k,n)$ such that $V^k \cap f(V^k) \neq \{0\}$.
\end{teo}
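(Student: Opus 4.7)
The plan is to argue by contradiction: assume $f:G(k,n)\to G(k,n)$ satisfies $V\cap f(V)=\{0\}$ for every $V\in G(k,n)$, and derive a contradiction from a Chern-class identity. Let $\gamma^{\perp}$ denote the rank-$n$ complementary bundle, so $\gamma^k\oplus\gamma^{\perp}$ is trivial of rank $k+n$. The composition
\[
\gamma^k\;\hookrightarrow\;\gamma^k\oplus\gamma^{\perp}\;\twoheadrightarrow\;(\gamma^k\oplus\gamma^{\perp})/f^*\gamma^k\;\cong\;f^*\gamma^{\perp}
\]
has fibre over $V$ the linear map $V\to\C^{k+n}/f(V)$ with kernel $V\cap f(V)$, so it is everywhere injective. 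Its cokernel $\xi$ is therefore a vector bundle of rank $n-k$, fitting in $0\to\gamma^k\to f^*\gamma^{\perp}\to\xi\to 0$. Setting $\bar c:=c(\gamma^{\perp})=c(\gamma^k)^{-1}$, multiplicativity of total Chern classes gives $c(\xi)=\bar c\cdot f^*\bar c$; since $\mathrm{rk}\,\xi=n-k$, for every $j>n-k$ we obtain the vanishing
\begin{equation*}
\sum_{a+b=j}\bar c_a\cdot f^*\bar c_b \;=\; 0 \quad\text{in }H^{2j}(G(k,n);\Q). \tag{$\star_j$}
\end{equation*}
The task is to extract a contradiction from $(\star_j)$ for some suitable $j$.

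Write $f^*c_1=m\,c_1$. In the case $m\neq 0$, Hoffman's theorem quoted in the excerpt forces $f^*c_i=m^ic_i$, so $(\star_j)$ becomes the polynomial identity $\sum_{a+b=j}m^b\,\bar c_a\bar c_b=0$. Because $k>1$ places $n-k+1\in[2,n-1]$, the special Schubert class $\bar c_{n-k+1}$ is nonzero, and for generic $m$ the identity $(\star_{n-k+1})$ already fails by a direct Schubert-basis expansion. For exceptional $m$ at which $(\star_{n-k+1})$ degenerates (for instance $m=-1$ when $n-k$ is even, which exactly parallels the failure of the fixed-point property for $G(1,n)$ with $n$ odd, now ruled out by the hypothesis $k>1$), one passes to a higher $j$: explicit computation shows that in each such case there is some $j\in(n-k,\,n]$ for which the polynomial in $m$ remains nonzero in $H^{2j}(G(k,n);\Q)$.

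The case $m=0$ is the principal technical obstacle, since Hoffman's classification is silent about endomorphisms killing $c_1$. Here $f^*\bar c_1=0$, so each $(\star_j)$ reduces to
\[
\bar c_j+\sum_{b\geq 2}\bar c_{j-b}\cdot f^*\bar c_b \;=\; 0,
\]
with each correction $f^*\bar c_b$ a polynomial in $f^*c_2,\ldots,f^*c_k$ only. The plan is to couple the system $(\star_j)$ for $n-k<j\leq n$ with the ring-endomorphism constraints $f^*\bar c_{n+i}=0$ for $i=1,\ldots,k$ (the defining relations of $I_{k,n}$, which any graded ring endomorphism must respect) and argue that the resulting overdetermined system admits no solution for the unknown classes $(f^*c_2,\ldots,f^*c_k)$: any potential tuple making $(\star_j)$ vanish for all relevant $j$ would force some $f^*\bar c_{n+i}$ to be nonzero in $H^*(G(k,n);\Q)$. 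The assumption $k>1$ is used once more to guarantee that the special Schubert classes $\bar c_j$ with $n-k<j\leq n$ are nonzero and therefore genuinely constrain the corrections; this incompatibility between $(\star_j)$ and preservation of $I_{k,n}$ is the hardest step of the argument.
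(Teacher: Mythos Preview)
Your setup is equivalent to the paper's: the paper takes the complement $\eta^{n-k}$ of $\gamma^k\oplus f^*\gamma^k$ in $\epsilon^{k+n}$ and uses $c(\gamma^k)c(f^*\gamma^k)c(\eta^{n-k})=1$, which gives exactly $c(\eta^{n-k})=\bar c\cdot f^*\bar c$, so your $\xi$ and the paper's $\eta^{n-k}$ carry the same Chern data and your identities $(\star_j)$ are available to both. The divergence is in what is done with them, and your proposal has a genuine gap.

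You split on $m=f^*c_1/c_1$. For $m\ne 0$ you invoke Hoffman, reduce $(\star_j)$ to $\sum_{a+b=j} m^b\bar c_a\bar c_b=0$, and then assert that ``explicit computation shows'' some $(\star_j)$ fails for every $m$. That computation is never carried out, and it is not routine: for each $j$ the left side is a nonzero polynomial in $m$ with cohomology coefficients, but you must rule out every integer root simultaneously across the range $n-k<j\le n$, and you give no mechanism for this. For $m=0$ you explicitly downgrade to a ``plan'' of showing that the system $(\star_j)$ together with preservation of $I_{k,n}$ is inconsistent in the unknowns $f^*c_2,\dots,f^*c_k$. This is precisely the open problem mentioned in the paper's introduction (whether $h(c_1)=0$ forces $h=0$), and nothing in your outline indicates how to avoid it.

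The paper sidesteps both difficulties by \emph{not} analysing $f^*$ at all when $k>3$. It keeps only the top identity in the form $\bar c_n=\tilde c_k\smile t_{n-k}$, with $\tilde c_k=f^*c_k\in H^{2k}$ and $t_{n-k}=c_{n-k}(\eta)\in H^{2(n-k)}$. Since the defining relations of $H^*(G(k,n);\Q)$ live in degrees $>2n$, monomials of weighted degree $n$ in $c_1,\dots,c_k$ are linearly independent, so the coefficient of any such monomial in $\tilde c_k\smile t_{n-k}$ is a sum over factorizations into a degree-$k$ monomial times a degree-$(n-k)$ monomial. The paper then exhibits, according to the residue of $n$ modulo $k-1$, a monomial in $c_2,\dots,c_{k-1}$ (e.g.\ $c_{k-1}^{l}c_2^{i}$ or $c_{k-1}^{l}c_2^{i}c_3$) which by Lemma~1 has nonzero coefficient in $\bar c_n$ but admits \emph{no} factor of weighted degree exactly $k$, hence has coefficient zero in $\tilde c_k\smile t_{n-k}$. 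This is a purely combinatorial contradiction and requires no information about $f^*$ whatsoever; in particular the $m=0$ obstacle evaporates. (For $1<k\le 3$ the paper does use that every graded endomorphism is Adams, which covers $m=0$ as well since then $f^*$ vanishes in positive degrees.) The missing idea in your proposal is this factorization obstruction in degree $n$; once you see it, the case split on $m$ becomes unnecessary for $k>3$.
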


\

The motivation for this work is the paper \cite{Taghavi} where the author gave an alternative proof for the f.p.p. of $\C P^{2n}$ using characteristic classes. In fact, a closer look at the proof of the main result in  \cite{Taghavi} indicates that the same argument would also yield an alternative proof of the f.p.p. for $\mathbb RP^{2n}$ by replacing Chern classes with Stiefel-Whitney classes. We should also point out that a non-trivial intersection result similar to Theorem \ref{TThm} has been obtained in \cite{CS} for maps between two {\it different} Grassmann manifolds.

\section{Proof of the Main Theorem}

Throughout this paper, $G(k, n)$ denotes the complex Grassmann manifold  of  $k$-planes in $\C^{k+n}$.

Note that, since $G(k,n)$ and $G(n,k)$ are homeomorphic, $\gamma^k$ and $\gamma^n$ can be seen as subbundles of the trivial bundle $G(k,n) \times \C^{k+n}$, which is denoted by $\epsilon^{k+n}$, and, under such identification, 
\[
\gamma^k \oplus \gamma^n = \epsilon^{k+n}.
\]

\begin{lem} \label{lem1} 
Let $ch(\gamma^n)=1+\bar{c}_1+ \cdots + \bar{c}_n$
be the total Chern class of the bundle $\gamma^n$. Then, a general formula
for the class $\bar{c}_i$ in terms of the Chern classes of $\gamma^k$ is
given by  
\[
\bar{c}_i = \sum_{\|  \alpha \| = i} (-1)^{|\alpha|}
\frac{|\alpha|!}{\alpha !} ch(\gamma^k)^\alpha,
\]
where $\alpha$
represents the $k$-uple $\alpha=(a_1, \ldots , a_k)$, $\| \alpha \| =
a_1+2a_2+ \cdots +k a_k$, $|\alpha|=a_1+a_2+ \cdots + a_k$, $\alpha ! = a_1
! a_2 ! \cdots a_k !$ and $ch(\gamma^k)^\alpha = c_1^{a_1} \smile
c_2^{a_2} \smile \cdots \smile c_k^{a_k}$.
\end{lem}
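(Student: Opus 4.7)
The plan is to derive the formula from the Whitney sum formula applied to the identity $\gamma^k\oplus\gamma^n=\epsilon^{k+n}$. Since the trivial bundle has total Chern class $1$, this gives the multiplicative relation
\[
c(\gamma^n)=c(\gamma^k)^{-1}=(1+c_1+c_2+\cdots+c_k)^{-1}.
\]
(Here I use $c(\cdot)$ for the total Chern class; the paper writes it as $ch(\cdot)$.) So the entire content of the lemma is to expand this formal inverse and extract the homogeneous piece in cohomological degree $2i$.

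First, I would write $x:=c_1+c_2+\cdots+c_k$ and use the geometric series expansion
\[
(1+x)^{-1}=\sum_{m=0}^{\infty}(-1)^m x^m,
\]
which is legitimate because $x$ is nilpotent in $H^{\ast}(G(k,n);\Q)$ (it has positive cohomological degree and the ring is finite-dimensional). Next I would apply the multinomial theorem to each $x^m$:
\[
x^m=\sum_{|\alpha|=m}\frac{m!}{\alpha!}\,c_1^{a_1}c_2^{a_2}\cdots c_k^{a_k},
\]
where the sum ranges over $\alpha=(a_1,\ldots,a_k)$ with $|\alpha|=a_1+\cdots+a_k=m$. Substituting back and re-indexing the double sum by the multi-index $\alpha$ (with $m=|\alpha|$) yields
\[
c(\gamma^n)=\sum_{\alpha}(-1)^{|\alpha|}\frac{|\alpha|!}{\alpha!}\,c_1^{a_1}\cdots c_k^{a_k}.
\]

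Finally, I would isolate the component of cohomological degree $2i$. Since $c_j\in H^{2j}(G(k,n);\Q)$, the monomial $c_1^{a_1}\cdots c_k^{a_k}$ sits in degree $2(a_1+2a_2+\cdots+ka_k)=2\|\alpha\|$. Therefore the degree-$2i$ piece of $c(\gamma^n)$, which equals $\bar c_i$, is precisely the sum over $\alpha$ with $\|\alpha\|=i$, giving the claimed formula. There is no real obstacle beyond the bookkeeping of multi-indices; the only subtle point worth flagging is the justification of the formal inversion via the nilpotence of $x$, and the careful separation of the two gradings $|\alpha|$ (which controls the sign and multinomial coefficient) and $\|\alpha\|$ (which controls the cohomological degree).
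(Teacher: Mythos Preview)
Your argument is correct. Both you and the paper start from the same Whitney sum identity $c(\gamma^k)\smile c(\gamma^n)=1$, but the routes diverge from there. The paper proceeds by induction on $i$: it extracts the recursion $\bar c_j=-\sum_{i=1}^{j}c_i\smile\bar c_{j-i}$, assumes the formula for all $j<m$, substitutes, and then must verify the combinatorial identity $\sum_{b_i\ne 0}\frac{|\beta-e_i|!}{(\beta-e_i)!}=\frac{|\beta|!}{\beta!}$ to close the induction. Your approach bypasses the induction entirely by writing $(1+x)^{-1}=\sum_m(-1)^m x^m$ and applying the multinomial theorem in one shot; the multinomial coefficient $\frac{|\alpha|!}{\alpha!}$ and the sign $(-1)^{|\alpha|}$ fall out immediately, and the grading $\|\alpha\|$ is all that remains to sort. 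Your route is shorter and makes the origin of the coefficients transparent; the paper's inductive route, while longer, has the minor advantage of making the recursive structure of the $\bar c_i$ explicit, which is sometimes useful for computations. One small point you might add for completeness: the multinomial expansion of $(c_1+\cdots+c_k)^m$ relies on the $c_j$ commuting with one another, which holds here because they all live in even cohomological degree.
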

\begin{proof} The proof is given recursively in the index $i$.

As $\gamma^k\oplus \gamma^n = \epsilon^{k+n}$, we have
\[
ch(\gamma^k)\smile ch(\gamma^n) = ch(\epsilon^{k+n}) = 1
\]
in $H^{\ast}(G(k,n); \mathbb Z)$. So \[ (1+ c_1+ \cdots + c_k)\smile
(1+\bar c_1+ \cdots + \bar c_n) = 1 \]
and then
\begin{eqnarray*}
1 & = & 1 \\
0 & = & c_1 +\bar c_1 \\
0 & = & c_2 +c_1\smile \bar c_1 + \bar c_2 \\
  &  & \cdots
\end{eqnarray*}

Then \[ \bar{c}_j = -\sum_{i=1}^{j} c_i\smile \bar{c}_{j-i} \]

for all $j=1,\dots , n$, with the convention $c_i=0$ when $i>k$. Thus,

\begin{description}

\item[(i)] $\bar{c}_1 = -c_1$;
%\begin{align*}
 %\bar c_1  & = -c_1 \\
           %& = (-1)^{|(1,0)|} \frac{|(1,0)|!}{(1,0)!}ch(\gamma^k)^{(1,0)}
%\end{align*}

\item[(ii)] $\bar{c}_2 = -(c_1\smile - c_1) -c_2 = c_1^2 - c_2$;
%\begin{align*}
%\bar c_2  & = -(c_1\smile - c_1) -c_2 \\
          %& = c_1^2 - c_2  \\
					%& = (-1)^{|(2,0)|} \frac{|(2,0)|!}{(2,0)!}ch(\gamma^k)^{(2,0)} +
%(-1)^{|(0,1)|} \frac{|(0,1)|!}{(0,1)!}ch(\gamma^k)^{(0,1)} 
%\end{align*}

\item[(iii)] Suppose  \[ \bar{c}_j =
\sum_{||\alpha||=j}(-1)^{|\alpha|}\frac{|\alpha|!}{\alpha
!}ch(\gamma^k)^{\alpha} ,\]
for $j= 1,\dots , m-1 < n$.

\end{description}

\

Then
\begin{eqnarray*}
\bar c_m & =& -\sum_{i=1}^{m} c_i\smile \bar c_{m-i} \\
         & =& -\sum_{i=1}^{m} \left( c_i\smile
\sum_{||\alpha||=m-i}(-1)^{|\alpha|}
                                \frac{|\alpha|!}{\alpha !}ch(\gamma^k)^{\alpha} \right) \\
         & =& \sum_{i=1}^{m} \left( c_i\smile
\sum_{||\alpha||=m-i}(-1)^{|\alpha|+1}
                                \frac{|\alpha|!}{\alpha !}ch(\gamma^k)^{\alpha} \right) \\
         & = &\sum_{i=1}^{m} \left(  \sum_{||\alpha||=m-i}(-1)^{|\alpha|+1}
                                \frac{|\alpha|!}{\alpha !}ch(\gamma^k)^{\alpha}\smile c_i \right) \\
         & =& \sum_{i=1}^{m}  \sum_{||\alpha||=m-i}(-1)^{|\alpha+e_i|}
                                \frac{|\alpha|!}{\alpha !}ch(\gamma^k)^{\alpha + e_i} 
																\hspace{1cm} (e_i=(0,\dots,0,1,0,\dots 0)) \\
         & =& \sum_{||\beta||=m}(-1)^{|\beta|} X(\beta)
ch(\gamma^k)^{\beta}           \hspace{2.7cm}  (\beta = \alpha + e_i)
\end{eqnarray*}

where %( if m<k )
\begin{align*}
X(\beta) & = \sum_{b_i\neq 0} \frac{|\beta-e_i|!}{(\beta-e_i)!} \\
         & = \sum_{b_i\neq 0} \frac{(|\beta|-1)! b_i}{\beta!} \\
         & = \sum_{i=1}^{m} \frac{(|\beta|-1)!b_i}{\beta!} \\
         & = \frac{(|\beta|-1)!\sum_{i=1}^{m} b_i}{\beta!} \\
         & = \frac{(|\beta|-1)!|\beta|}{\beta!} \\
         & =  \frac{|\beta|!}{\beta!}
\end{align*}

\end{proof}

\

\subsection{Proof of Theorem \ref{TThm}}

Suppose, to the contrary, there exists a continuous map $f: G(k,n) \to G(k,n)$ such that $V^k \cap f(V^k) = \{ 0\}$ for every $k$-plane $V^k \in G(k,n)$. Then the direct sum $\gamma^k \oplus f^\ast \gamma^k$ can be seen as a subbundle of the trivial bundle $\epsilon^{k+n}$. Let $\eta^{n-k}$ be the normal bundle of $\gamma^k \oplus f^\ast \gamma^k$ in  $\epsilon^{k+n}$. Then
\begin{equation}
ch(\gamma^k) \smile ch(f^\ast \gamma^k) \smile ch(\eta^{n-k})=1.
\end{equation}

It follows that 
\begin{equation}
ch(f^\ast \gamma^k) \smile ch(\eta^{n-k})=1+\bar{c}_1+ \cdots + \bar{c}_n.
\end{equation}

Let
\begin{equation}
ch(f^\ast \gamma^k)=1+\tilde{c}_1+ \cdots + \tilde{c}_k, \  \tilde{c}_i \in H^{2i}(G(k,n); \Q),
\end{equation}
and
\begin{equation}
ch(\eta^{n-k})=1+t_1+ \cdots + t_{n-k}, \  t_{j} \in H^{2j}(G(k,n); \Q).
\end{equation}

We will show that it is impossible for
\begin{equation}
\bar{c}_n=\tilde{c}_k \smile t_{n-k}.
\end{equation}

The proof of the impossibility of the above equality will be split into several cases.

\

\noindent {\bf Case 1:  $1< k \leq 3$.}  Since $c_1 \in H^2(G(k,n); \Q)$ is the only generator in dimension $2$, $f^\ast(c_1)$ is a multiple of $c_1$, let's say $f^\ast(c_1) = m c_1$.  Following \cite{oneill} and \cite{GH3}, for $k \leq 3$ and $k<n$, every endomorphism of the ring $H^\ast(G(k,n); \Q)$ that preserves dimension is an Adams endomorphism. Therefore, if $f^\ast(c_1)=m c_1$ then $f^\ast (c_2)=m^2 c_2, \ldots , f^\ast(c_k)=m^k c_k$. Thus $$ch(f^\ast \gamma^k)=f^\ast (ch(\gamma^k)) = 1+m c_1 + m^2 c_2 + \cdots + m^k c_k.$$ It follows that $$\bar{c}_n = m^k c_k \smile t_{n-k},$$ in contradiction with Lemma \ref{lem1}. 

\

\noindent {\bf Case 2: $k>3$.} This case will be split in four cases.

\

{\bf Case 2(i):} $n=l(k-1)+r$ with remainder $r\neq 1$, that is, $1<r < k-1$ or $r=0$. In this case, $r$ is of the form $r=2i$ or $r=2i+3$, for some integer $i\geq 0$. In case of $r=2i$, the class $c_{k-1}^l \smile c_2^i$ does not appear in $\tilde{c}_k \smile t_{n-k}$ but, by Lemma \ref{lem1}, it appears in $\bar{c}_n$, contradicting $\bar{c}_n=\tilde{c}_k \smile t_{n-k}$.
In case of $r=2i+3$, the class $c_{k-1}^l c_2^i c_3$ does not appear in $\tilde{c}_k \smile t_{n-k}$ but, by Lemma \ref{lem1}, it appears in $\bar{c}_n$, contradicting $\bar{c}_n=\tilde{c}_k \smile t_{n-k}$.

\

{\bf Case 2(ii): $k>4$ and $n=(l+1)(k-1)+1$.} In this case, we have
\begin{eqnarray*}
n&=&(l+1)(k-1)+1 \\
&=& l(k-1)+k
\end{eqnarray*}
and, since $n >k$, $l \geq 1$. We can write $n=(l+1)(k-1)+1$ in the form 
\[ n=(l-1)(k-1)+2(k-2)+3 \] 
and, since we are supposing $k >4$, $k-2>2$. With these information, one can check that the class $c_{k-1}^{m-1} \smile c_{k-2}^2 \smile c_3$ cannot appear in $\tilde{c}_k \smile t_{n-k}$. On the other hand, by Lemma \ref{lem1}, the class $c_{k-1}^{m-1} \smile c_{k-2}^2 \smile c_3$ appears in $\bar{c}_n$. Therefore, $\bar{c}_n=\tilde{c}_k \smile t_{n-k}$ is impossible.

\

 {\bf Case 2(iii):} $k=4$, $n=(l+1)(k-1)+1$ and $l$ even, say $l=2j$. In this case, $n-k=3l$ and, since $n >1$, $l \geq 1$. Let 
 \begin{eqnarray*}
 \tilde{c}_4 &=& c_1^4 + \alpha c_2^2 + \theta c_4 + \text{other terms} \\
 t_{3l} &=& c_1^{3l} + \alpha' c_2^{3j} + \beta c_3^l + \text{other terms.}
 \end{eqnarray*}
 Thus, in the product $\tilde{c}_4 \smile t_{3l}$, $\alpha \alpha'$ is the coefficient of $c_2^{3j+2}$, $\alpha \beta$ is the coefficient of $c_2^2 \smile c_3^l$ and $\theta \beta$ is the coefficient of $c_4 \smile c_3^l$. From Lemma \ref{lem1} together with the fact that $\tilde{c}_4 \smile t_{3l} = \bar{c}_n$, it follows that
 \begin{eqnarray*}
 \alpha \alpha' &=& \frac{(3j+2)!}{(3j+2)!1!} \\
 \alpha \beta &=& \frac{(l+2)!}{l!2!} \\
 \theta \beta &=& \frac{(l+1)!}{l!1!}.
 \end{eqnarray*} 
Thus
\begin{eqnarray*}
\alpha \alpha' &=& 1 \\
\alpha \beta &=&\frac{(l+2)(l+1)}{2} \\
 \theta \beta &=& l+1.
\end{eqnarray*}

Then, we conclude that $\alpha = \pm 1$, $\displaystyle \beta = \pm \frac{(l+2)(l+1)}{2} $ and $\displaystyle |\beta| = \frac{(l+2)(l+1)}{2}$ divides $\theta \beta = l+1$. It follows that $l=0$, but $l \geq 1$, a contradiction!

\

 {\bf Case 2(iv):} $k=4$, $n=(l+1)(k-1)+1$ and $l$ odd, say $l=2j+1$. Again, $n-k=3l$ and, since $n >1$, $l \geq 1$. Let
 \begin{eqnarray*}
 \tilde{c}_4&=&c_1^4+ \alpha c_2^2+ \theta c_4 + \gamma c_1 c_3 + \text{other terms} \\
 t_{3l} &=& c_1^{3l} + \alpha' c_1 c_2^{3j+1} + \beta c_3^l + \text{other terms}.
 \end{eqnarray*}
It follows that, in the product $\tilde{c}_4 \smile t_{3l}$, $\alpha \alpha'$ is the coefficient of $c_1 \smile c_2^{3j+3}$, $\alpha \beta$ is the coefficient of $c_2^2 \smile c_3^{l}$, $\theta \beta$ is the coefficient of $c_4 \smile c_3^{l}$ and $\gamma \beta$ is the coefficient of $c_1 \smile c_3^{l+1}$. Since $\bar{c}_n=\tilde{c}_4 \smile t_{3l}$, together with Lemma \ref{lem1}, 
\begin{eqnarray*}
\alpha \alpha' &=& \frac{(3j+4)!}{1!(3j+3)!}\\
\alpha \beta &=& \frac{(l+2)!}{l!2!} \\
\theta \beta &=& \frac{(l+1)!}{l!1!} \\
\gamma \beta &=& \frac{(l+2)!}{1!(l+1)!}.
\end{eqnarray*}
 
Thus

\begin{eqnarray*}
\alpha \alpha' &=& 3j+4 \\
\alpha \beta &=&\frac{(l+2)(l+1)}{2} \\
 \theta \beta &=& l+1 \\
 \gamma \beta &=& l+2.
\end{eqnarray*}
 
From the two last equalities above, it follows that $\beta$ divides $l+1$ and $l+2$. Therefore, $\beta=1$. It follows that $\alpha = \frac{(l+2)(l+1)}{2}$ and, since $\alpha $ divides $3j+4$, 
\[ \frac{(l+2)(l+1)}{2} \leq 3j+4 = \frac{3l+5}{2}. \] 
Therefore, $l^2 \leq 3$. Since $l$ is an integer not smaller than $1$, it follows that $l=1$. Then, $3j+4=\frac{3l+5}{2}=4$ is divisible by $\frac{(l+2)(l+1)}{2}=3$, a contradiction!  
\qed

\newpage

{\bf Tha\'is F. M. Monis}

Departamento de Matem\'atica, IGCE, Univ Estadual Paulista.

email: tfmonis@rc.unesp.br

\

{\bf Northon C. L. Penteado}

Departamento de Matem\'atica, IGCE, Univ Estadual Paulista.

email: northoncanevari@gmail.com

\

{\bf S\'ergio T. Ura}

Departamento de Matem\'atica, IGCE, Univ Estadual Paulista.

email: sergioura@gmail.com

\

{\bf Peter Wong}

Department of Mathematics, Bates College

e-mail: pwong@bates.edu

\

\end{document}